\documentclass[12pt,reqno]{amsproc}

\textwidth=465pt \evensidemargin=0pt \oddsidemargin=0pt
\marginparsep=8pt \marginparpush=8pt \textheight=650pt
\topmargin=-25pt

\setlength{\parskip}{2pt}

\usepackage{amsmath}
\usepackage{amssymb,cite, enumerate}
\usepackage{wasysym}

\renewcommand{\a}{\alpha} 
 
\numberwithin{equation}{section}
\theoremstyle{plain}

\newtheorem{theorem}{Theorem}[section]

\newtheorem{remark}[theorem]{Remark}

\theoremstyle{definition}
\newtheorem{definition}[theorem]{Definition}

\allowdisplaybreaks

\begin{document}

\title{Toeplitz determinants associated with  Ma-Minda classes of starlike and convex functions}

\author{Om P. Ahuja}
\address{Department of Mathematics, Kent State University,
Ohio, USA }
\email{oahuja@kent.edu},

\author{Kanika Khatter}
\address{Department of Mathematics, Hindu Girls College,
Sonipat, Haryana, India}
\email{kanika.khatter@yahoo.com}

\author{V. Ravichandran}
\address{Department of Mathematics, National Institute of Technology, Tiruchirappalli 620 015,
India}
\email{vravi68@gmail.com}

\maketitle

\begin{abstract}
A starlike function $f$ is  characterized by the quantity $zf'(z)/f(z)$ lying in  the right half-plane. This paper deals with sharp bounds for certain   Toeplitz determinants whose entries are the coefficients of the functions  $f$  for which the quantity $zf'(z)/f(z)$ takes values in certain specific subset in the right half-plane.   The results obtained include several new special cases and some known results.
\keywords{Univalent functions \and starlike functions \and convex functions \and  Toeplitz determinants \and coefficient bounds}
\end{abstract}

\section{Introduction}
%\label{intro}
Let $\mathbb{D}$ be the open unit disk in $\mathbb{C}$ and let $\mathcal{A}$ be the class of all analytic functions $f:\mathbb{D}\to\mathbb{C}$ having Taylor series $  f(z)= z+ \sum_{n=2}^{\infty} a_n z^n$.
Let  $\mathcal{S}$ be the well known subclass of $\mathcal{A}$ of univalent ($\equiv$ one-to-one) functions. A set $D$ is starlike with respect to $0\in D$ if $tw\in D$ for all  $w\in D$ and for all $t$ with $0\leq t \leq  1$; it is  convex if $tw_1+(1-t)w_2\in D$ for all $w_1,w_2\in D$ and for all $t$ with $0\leq t\leq 1$.  The subclasses of $\mathcal{S}$ consisting of functions
$f$  for which $f(\mathbb{D})$  is starlike with respect to the origin    and  convex   are denoted respectively by $\mathcal{S}^*$ and  $\mathcal{K}$.
These classes were introduced and studied aiming at a proof of the famous coefficient conjecture of  Bieberbach that $|a_n|\leq n$ with equality for the Koebe function $z/(1-z)^2$ or its rotations; see the  survey article by Ahuja \cite{AA} and several references therein for a history on the problem. The concept of subordination is useful in unifying various subclasses of univalent functions. First, let us denote by $\Omega$ the class of all analytic functions $w:\mathbb{D}\to \mathbb{D}$ with $w(0)=0$. A function in $\Omega $ is known as a Schwarz function.  An analytic function $f$ is said to be subordinate to the analytic function $F$,  written $f\prec F$ or $f(z)\prec F (z),\quad (z\in\mathbb{D})$ if there exists a   function $w\in \Omega$   such that $f(z)=F(w(z))$ for all  $z\in \mathbb{D}$. If the function $F $ is univalent in $\mathbb{D}$,  then the subordination $f(z)\prec F (z)$ holds if and only if $f(0)=F(0)$ and $f(\mathbb{D})\subseteq F(\mathbb{D})$. The class  $\mathcal{P}$ of Caratheodory functions consists of all  analytic functions $p:\mathbb{D}\to \mathbb{C}$ with $\operatorname{Re} p(z)>0$ for  $z\in \mathbb{D}$. The two classes are closely associated as a  function $p\in \mathcal{P}$ if and only if there is a $w\in \Omega$ with $p=(1+w)/(1-w)$. These functions are characterized analytically as follows:
\begin{align*}
  \mathcal{S}^* &= \Big\{ f\in \mathcal{A}: \operatorname{Re}\Big( \frac{z f'(z)}{f(z)}\Big)>0 \Big\} \\
  &=\Big\{ f\in \mathcal{A}:  \frac{z f'(z)}{f(z)}\prec \frac{1+z}{1-z} \Big\},    \intertext{and}
  \mathcal{K}  &= \Big\{ f\in \mathcal{A}: \operatorname{Re}\Big(1 + \frac{z f''(z)}{f'(z)}\Big)>0 \Big\}\\
   &=   \Big\{ f\in \mathcal{A}:  1 + \frac{z f''(z)}{f'(z)}\prec \frac{1+z}{1-z} \Big\}.
%  \mathcal{C} &= \Big\{ f\in \mathcal{A}: \operatorname{Re}\Big(\frac{f'(z)}{g'(z)}\Big)>0 \text{ for some convex function }  g \in \mathcal{K} \Big\}\\
%& = \Big\{ f\in \mathcal{A}: \frac{f'(z)}{g'(z)} \prec \frac{1+z}{1-z} \text{ for some convex function }  g \in \mathcal{K} \Big\}.
\end{align*}

Ma and Minda \cite{mam} gave a  unified treatment of distortion, growth and covering theorems for the  functions $f\in \mathcal{S}^*$ and $f\in \mathcal{K}$ for which either of the quantity $zf'(z)/f(z)$ or $1+ z f''(z)/f'(z)$ is subordinate to a more general subordinate function $\varphi\in\mathcal{P}$. In \cite{mam}, it is assumed that the function $\varphi$ is starlike and the image of unit disk is symmetric with respect to real axis. However, we do not require these conditions in this paper.

\begin{definition} For an  analytic univalent function    $\varphi$  with positive real part in $\mathbb{D}$,  $\varphi(0)= 1$, $\varphi'(0) >0$ and $\varphi''(0)\in \mathbb{R}$, the classes $\mathcal{S}^*(\varphi)$ and $ \mathcal{K}(\varphi)$ are defined by
\begin{align*}
 \mathcal{S}^*(\varphi)&:=\left\{ f \in \mathcal{S}: \frac{zf'(z)}{f(z)}\prec \varphi(z) \right\}\intertext{and}
 \mathcal{K}(\varphi)&:=  \left\{ f\in \mathcal{S}: 1+\frac{zf''(z)}{f'(z)}\prec \varphi(z) \right\}.
\end{align*}
\end{definition}

Toeplitz matrices and their determinants play an important role in several branches of mathematics and have many applications \cite{BB}. For information on  applications of Toeplitz matrices to several areas of pure and applied mathematics, we refer to the survey article by Ye and Lim \cite{lim}. We recall that Toeplitz symmetric matrices have constant entries along the diagonal.  For the function  $  f(z)= z+ \sum_{n=2}^{\infty} a_n z^n$, we associate a   determinant  $T_q(n)$ defined by
\begin{equation*}
T_q(n) := \left| \begin{array}{cccc}
a_n      & a_{n+1} &  \cdots & a_{n+q-1} \\
a_{n+1}  & a_{n} &  \cdots & a_{n+q}\\
\vdots   & \vdots  &         & \vdots  \\
a_{n+q-1}& a_{n+q} &  \cdots & a_{n}
 \end{array} \right|.
\end{equation*}
In 2017, Ali \emph{et al.}\ \cite{halim}   studied   Toeplitz  determinants $ T_q(n)$ for initial values of $n$ and $q$, where the entries of $T_q(n)$ are the coefficients of the functions that are starlike, convex and close to convex. Motivated by Ali \emph{et al.}\  \cite{halim}, some researchers in the last three years studied $T_q(n)$ for low values of $n$ and $q$, where entries are the coefficients of functions in several subclasses of analytic functions.  Some recent work on coefficient problems includes \cite{cho2,cudna,lec1,lec2}.

In this paper, we obtain sharp estimates for Toeplitz determinants $ T_2(2) $ and $ T_3(1)$ for functions belonging to the classes $\mathcal{S}^*(\varphi)$ and $\mathcal{K}(\varphi)$. The functions $K_{\varphi}$ and $H_{\varphi}$ defined by
\begin{equation}\label{K1}
  \frac{z K_{\varphi}'(z)}{K_{\varphi}(z)} = \varphi(i z), \quad   K_{\varphi}(0)= K_{\varphi}'(0)-1=0
\end{equation}
and
\begin{equation}\label{H1}
 1+  \frac{z H_{\varphi }''(z)}{H_{\varphi }'(z)} = \varphi( i z), \quad H_{\varphi }(0)= H_{\varphi}'(0)-1=0
\end{equation}
respectively belong to the classes $\mathcal{S}^*(\varphi)$ and $\mathcal{K}(\varphi)$. We shall  use these functions to demonstrate sharpness in certain cases.   For a function $p\in \mathcal{P}$ with  $ p(z)=  1+ c_1 z + c_2 z^2 + c_3 z^3 +\cdots$,  it is well-known \cite{grenander} that $|c_n|  \leq 2$. The main results are proved by using this estimate by  associating coefficients of the functions in our classes to the functions in the class $\mathcal{P}$. We shall also use estimates for the Fekete-Szeg\"o functional for the two classes $\mathcal{S}^*(\varphi)$ and $\mathcal{K}(\varphi)$ from Ali et al. \cite{seeni} and Ma and Minda \cite{mam}. The symmetry of the image of $\varphi$ was  used in \cite{mam} to ensure that the coefficients of $\varphi$ are real and we have assumed it here for the first two coefficients. In  \cite{mam}, the univalence was used in defining the function $p_1$ by
\[p_1(z)=\frac{1-\varphi^{-1}(p(z))}{1+\varphi^{-1}(p(z))}. \] However, this requirement can be dropped by defining $p_1$ by \eqref{1k}.

\section{Main Results}%Bounds for the Hankel determinants}

Theorem~\ref{th1} and Theorem~\ref{th2} respectively give the sharp bound for $ T_2(2) =a_3^2 - a_2^2$   for  functions $f \in \mathcal{S}^*(\varphi)$ and $f\in  \mathcal{K}(\varphi)$.
\begin{theorem}\label{th1}
  If $f \in \mathcal{S}^*(\varphi)$ and $\varphi(z)= 1+ B_1 z + B_2 z^2 + \cdots$ with  $0<B_1\leq |B_2+B_1^2|$, then the Toeplitz determinant $T_2(2)$ satisfies the sharp bound:
  \begin{equation*}
    |T_2(2)| \leq \frac{1}{4}(B_2 + B_1^2)^2 + B_1^2.
  \end{equation*}
    % The bound obtained is sharp for the function $K_{\varphi}$ defined by \eqref{K1}.
\end{theorem}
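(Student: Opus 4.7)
The plan is to reduce everything to the coefficients of a Carath\'eodory function and bound $|a_2|$ and $|a_3|$ separately, then use $|T_2(2)|=|a_2^2-a_3^2|\le|a_3|^2+|a_2|^2$.

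\smallskip
\noindent\textbf{Step 1: Coefficient expansion.} Since $f\in\mathcal{S}^*(\varphi)$, there is a Schwarz function $w\in\Omega$ with $zf'(z)/f(z)=\varphi(w(z))$. Set $p(z)=(1+w(z))/(1-w(z))=1+c_1z+c_2z^2+\cdots\in\mathcal{P}$, so that $w=(p-1)/(p+1)$. Expanding yields $w(z)=\tfrac12c_1z+\tfrac12(c_2-\tfrac12c_1^2)z^2+\cdots$. Matching the $z$ and $z^2$ coefficients in $\varphi(w(z))=1+a_2z+(2a_3-a_2^2)z^2+\cdots$ gives
\[
a_2=\tfrac{B_1}{2}c_1,\qquad a_3=\tfrac{B_1}{4}c_2+\tfrac{B_1^2+B_2-B_1}{8}c_1^2.
\]

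\smallskip
\noindent\textbf{Step 2: Individual bounds.} The standard estimate $|c_n|\le 2$ gives immediately $|a_2|^2\le B_1^2$. For $a_3$, factor
\[
a_3=\tfrac{B_1}{4}\bigl(c_2-\nu c_1^2\bigr),\qquad \nu=\tfrac{B_1-B_1^2-B_2}{2B_1},
\]
and invoke the Keogh--Merkes/Ma--Minda inequality $|c_2-\nu c_1^2|\le 2\max\{1,|2\nu-1|\}$. A direct computation yields $2\nu-1=-(B_1^2+B_2)/B_1$, so under the hypothesis $B_1\le|B_2+B_1^2|$ the maximum equals $|2\nu-1|$, giving $|a_3|\le\tfrac12|B_1^2+B_2|$ and hence $|a_3|^2\le\tfrac14(B_1^2+B_2)^2$.

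\smallskip
\noindent\textbf{Step 3: Combine and establish sharpness.} Since $T_2(2)=a_2^2-a_3^2$, the triangle inequality gives
\[
|T_2(2)|\le|a_3|^2+|a_2|^2\le\tfrac14(B_1^2+B_2)^2+B_1^2.
\]
For sharpness, take $f=K_\varphi$ defined by \eqref{K1}. A short calculation from $zK_\varphi'(z)/K_\varphi(z)=\varphi(iz)=1+iB_1z-B_2z^2+\cdots$ yields $a_2=iB_1$ and $a_3=-(B_1^2+B_2)/2$, so that $a_2^2-a_3^2=-B_1^2-\tfrac14(B_1^2+B_2)^2$ and equality holds in absolute value.

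\smallskip
\noindent\textbf{Main obstacle.} The only delicate point is the splitting $|T_2(2)|\le|a_3|^2+|a_2|^2$: we must verify that the extremal function $K_\varphi$ achieves both $|a_2|^2=B_1^2$ and $|a_3|^2=\tfrac14(B_1^2+B_2)^2$ simultaneously \emph{and} that the signs of $a_2^2$ and $a_3^2$ align so that no cancellation occurs in $a_2^2-a_3^2$. The choice $\varphi(iz)$ (rather than $\varphi(z)$) in the definition of $K_\varphi$ is precisely what produces the purely imaginary $a_2$ and real negative $a_3$ needed for this alignment; this is why the hypothesis $B_1\le|B_2+B_1^2|$ (which aligns the Keogh--Merkes extremum with the same pattern $c_1=0$, $|c_2|=2$ for $|a_3|$ versus $|c_1|=2$ for $|a_2|$) is natural here.
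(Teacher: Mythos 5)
Your proposal is correct and follows essentially the same route as the paper: express $a_2,a_3$ via the Carath\'eodory coefficients, bound $|a_2|\le B_1$ and $|a_3|\le\tfrac12|B_1^2+B_2|$ (the paper quotes the three-case Fekete--Szeg\H{o} inequality of Ali et al.\ with $\mu=0$, which is equivalent to your use of $|c_2-\nu c_1^2|\le 2\max\{1,|2\nu-1|\}$), apply the triangle inequality to $a_3^2-a_2^2$, and exhibit sharpness with $K_\varphi$, whose coefficients $a_2=iB_1$, $a_3=-(B_1^2+B_2)/2$ you compute correctly. No gaps.
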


\begin{proof}
  Since $f \in \mathcal{S}^*(\varphi)$,  there is a function $w$ in the class $\Omega$ of Schwarz functions   satisfying
  that
  \begin{equation}\label{4k}
   \frac{z f'(z)}{f(z)}= \varphi(w(z)).
  \end{equation}
  Corresponding to the function $w$,  define the function $p_1:\mathbb{D}\to\mathbb{C}$   by
 \begin{equation}\label{1k}  p_1(z)=\dfrac{1+ w(z)}{1-w(z)}=1 + c_1 z + c_2 z^2 + \cdots  \end{equation}
so that
\begin{equation}\label{1ka}  w(z)= \dfrac{p_1(z)-1}{p_1(z)+1}= \dfrac{1}{2} c_1 z +\dfrac{1}{2}\left(c_2-\dfrac{1}{2}c_1^2\right)z^2+\cdots.
 \end{equation}
 Clearly, the function $p_1$ is analytic in $\mathbb{D}$ with $p_1(0)=1$. Since $w\in \Omega$, it follows that  $ p_1\in \mathcal{P}$. Using \eqref{1ka} and the Taylor series of $\varphi$ given by
 $\varphi(z)= 1 + B_1 z + B_2 z^2 + B_3 z^3+ \cdots$,  we get
 \begin{equation}\label{2K*}
 \varphi\left(w(z)\right)=1 + \dfrac{1}{2}B_1 c_1z + \left(\dfrac{1}{2}B_1\left(c_2-\dfrac{1}{2}c_1^2\right)+\dfrac{1}{4}B_2 c_1^2\right)z^2 +\cdots.
 \end{equation}
Since  $f(z)= z+ a_2 z^2 + a_3 z^3 + \cdots$,  the Taylor series expansion of  the function $zf'/f$ is given by
 \begin{align}\label{6k}
  \frac{z f'(z)}{f(z)}&=1 + a_2 z + (-a_2^2 + 2 a_3) z^2 + (a_2^3 - 3 a_2 a_3 +
    3 a_4) z^3 \\& \quad + (-a_2^4 + 4 a_2^2 a_3 - 2 a_3^2 - 4 a_2 a_4 + 4 a_5) z^4 + \cdots.\notag
 \end{align}
 Using \eqref{4k}, \eqref{2K*} and \eqref{6k}, the coefficients $a_2$ and $a_3$ can be expressed as a function of the coefficients $c_i$ of $p \in \mathcal{P}$ and $B_i$ of $\varphi$  as follows:
\begin{align}
  a_2 &= \frac{1}{2} B_1 c_1 \label{a2} \intertext{and}
  a_3 &= \frac{1}{8}\big(( B_1^2 -B_1  + B_2 )c_1^2 + 2 B_1 c_2\big)\label{a3}.
 % \\ a_4 &= \frac{1}{48}\big((2 B_1 - 3 B_1^2 + B_1^3 - 4 B_2 + 3 B_1 B_2 + 2 B_3) c_1^3 + 8 B_1 c_3 \\ &\quad + (-8 B_1 +
%    6 B_1^2 + 8 B_2) c_1 c_2\big)\notag\label{a4}.
\end{align}
The equations  \eqref{a2} and \eqref{a3} (see Ali et al. \cite{seeni} for a general result for $p$-valent functions) readily shows that
\begin{align}\label{fs1}
|a_3 - \mu a_2^2|\leq \begin{cases}
                          \ &\dfrac{1}{2}\big(B_2 +  B_1^2 -2 \mu B_1^2\big),  \quad  \text{ if } 2 B_1^2 \mu \leq B_2 + B_1^2 -B_1; \\[14pt]
                          \ &\dfrac{1}{2} B_1, \quad  \text{ if }    B_2+ B_1^2 - B_1   \leq 2 B_1^2 \mu \leq B_2 + B_1^2 +B_1; \\[14pt]
                         \ & \dfrac{1}{2}\big(-B_2 -  B_1^2 + 2 \mu B_1^2\big),\quad   \text{ if } B_2 + B_1^2 +B_1 \leq 2 B_1^2  \mu.
                        \end{cases}
\end{align}
Since  $|c_n| \leq 2$, the equation \eqref{a2} shows that
\begin{align} \label{ba2}
|a_2| \leq  B_1
\end{align}  and, when  $B_1\leq |B_2+B_1^2|$, the equation \eqref{fs1} readily yields
\begin{align}\label{ba3}
|a_3|  \leq \frac{1}{2}\big| B_1^2 + B_2 \big|
\end{align}
Using these estimates for the second and third coefficients given in \eqref{ba2} and \eqref{ba3}, we have
\begin{equation*}
 |a_3^2 - a_2^2| \leq |a_3|^2 + |a_2|^2 \leq \frac{1}{4}\big( B_1^2 + B_2\big)^2 + B_1^2.
\end{equation*}

The result is sharp for the function $K_\varphi$ given by \eqref{K1}. This function $K_\varphi$ has the   Taylor series given by
\[K_\varphi(z) =z-iB_1z^2-\frac{1}{2}(B_1^2+B_2)z^3+\dotsb.\]
The Taylor series can be obtained by noting that $K_\varphi$ corresponds to the function $f$ given by \eqref{4k} when $w(z)=iz$. In this case, $p_1(z)=1+2iz-2z^2+\dotsb$. With $c_1=2i$ and $c_2=-2$, we get $a_2=iB_1$ and $a_3=- (B_1^2+B_2)/2$. Clearly, for the function $K_\varphi$, we have
\[ |a_3^2 - a_2^2| = \frac{1}{4}\big( B_1^2 + B_2\big)^2 + B_1^2\]
proving the sharpness.
\end{proof}

\begin{theorem}\label{th2}
  If $f \in \mathcal{K}(\varphi)$ and $\varphi(z)= 1+ B_1 z + B_2 z^2 + \cdots$ with $0<B_1\leq |B_2+B_1^2|$, then the Toeplitz determinant $T_2(2)$ satisfies the sharp bound given by
  \begin{equation*}
    |T_2(2)| \leq \frac{1}{36}\big( B_1^2 + B_2\big)^2 + \frac{1}{4} B_1^2.
    \end{equation*}
\end{theorem}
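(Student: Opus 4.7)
My strategy is to reduce the problem to Theorem~\ref{th1} via the Alexander correspondence, and then invoke the triangle inequality exactly as in the starlike case. Writing $g(z):=zf'(z)=z+\sum_{n\geq 2} n a_n z^n$, a direct computation shows
\[\frac{zg'(z)}{g(z)}=1+\frac{zf''(z)}{f'(z)}\prec\varphi(z),\]
so $g\in\mathcal{S}^*(\varphi)$. The coefficients of $g$ are $b_2=2a_2$ and $b_3=3a_3$, and therefore the bounds \eqref{ba2} and \eqref{ba3} from the proof of Theorem~\ref{th1}, valid under the hypothesis $0<B_1\leq|B_2+B_1^2|$, give
\[|a_2|\leq \frac{B_1}{2}\qquad\text{and}\qquad |a_3|\leq \frac{1}{6}\bigl|B_1^2+B_2\bigr|.\]

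A self-contained alternative mirrors the derivation in Theorem~\ref{th1}: pick a Schwarz function $w$ with $1+zf''(z)/f'(z)=\varphi(w(z))$, define $p_1$ by \eqref{1k}, expand $1+zf''(z)/f'(z)=1+2a_2 z+(6a_3-4a_2^2)z^2+\cdots$, and match Taylor coefficients with those of $\varphi(w(z))$ in \eqref{2K*}. This yields $a_2=B_1 c_1/4$ and
\[a_3=\tfrac{1}{24}\bigl((B_1^2-B_1+B_2)c_1^2+2B_1 c_2\bigr),\]
from which $|c_n|\leq 2$ together with the Ma--Minda Fekete--Szeg\H{o} inequality \eqref{fs1} (applied with the extra factor of $1/3$) delivers the same two estimates.

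Combining these, the triangle inequality immediately yields
\[|T_2(2)|=|a_3^2-a_2^2|\leq |a_3|^2+|a_2|^2\leq \frac{1}{36}\bigl(B_1^2+B_2\bigr)^2+\frac{1}{4}B_1^2,\]
which is the claimed bound. For sharpness I take $f=H_\varphi$ defined by \eqref{H1}. Since Alexander's identity gives $zH_\varphi'(z)=K_\varphi(z)$, and Theorem~\ref{th1} already supplies the expansion $K_\varphi(z)=z-iB_1 z^2-\tfrac{1}{2}(B_1^2+B_2)z^3+\cdots$, dividing by $z$ and integrating produces $H_\varphi(z)=z-\tfrac{i}{2}B_1 z^2-\tfrac{1}{6}(B_1^2+B_2)z^3+\cdots$, so $a_2=-iB_1/2$ and $a_3=-(B_1^2+B_2)/6$; substituting confirms equality in the stated bound.

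I anticipate no genuine obstacle here: the whole proof is essentially forced once the analogue of \eqref{a3} for $\mathcal{K}(\varphi)$ is written down, and the Alexander transfer makes the extremal function $H_\varphi$ transparent. The only place care is needed is tracking the factors $2$ and $3$ coming from differentiation, which account for the improvement from $\tfrac{1}{4}$ to $\tfrac{1}{36}$ on the leading term.
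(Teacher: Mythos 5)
Your proposal is correct, and its skeleton (bound $|a_2|$ and $|a_3|$ separately, apply the triangle inequality to $a_3^2-a_2^2$, exhibit sharpness with $H_\varphi$) is exactly the paper's. The one genuine difference is how you obtain the coefficient bounds: the paper re-derives $a_2=\tfrac14 B_1c_1$ and $a_3=\tfrac{1}{24}\bigl((B_1^2-B_1+B_2)c_1^2+2B_1c_2\bigr)$ from scratch and then quotes the Ma--Minda Fekete--Szeg\H{o} inequality \eqref{fs2} for the convex class with $\mu=0$, whereas your primary route transfers the already-proved starlike estimates \eqref{ba2} and \eqref{ba3} through the Alexander correspondence $g=zf'\in\mathcal{S}^*(\varphi)$, $b_2=2a_2$, $b_3=3a_3$. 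That transfer is valid under the same hypothesis $0<B_1\leq|B_2+B_1^2|$ and makes the origin of the factors $\tfrac12$ and $\tfrac13$ transparent; it also lets you read off the Taylor coefficients of $H_\varphi$ from those of $K_\varphi$ via $zH_\varphi'=K_\varphi$ rather than recomputing them. Your ``self-contained alternative'' is essentially the paper's argument verbatim, except that the inequality you need there is \eqref{fs2}, not ``\eqref{fs1} with an extra factor of $1/3$'' --- the convex Fekete--Szeg\H{o} bound is a separate statement, not a rescaling of the starlike one, so you should cite it as such (or stick with the Alexander route, which avoids it entirely). The sign of $a_2$ for $H_\varphi$ that you obtain differs from the paper's (you inherit a sign slip in the paper's displayed expansion of $K_\varphi$), but this is immaterial since only $a_2^2=-B_1^2/4$ enters the sharpness computation.
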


\begin{proof}
Let $f(z)= z +a_2 z^2 +a_3 z^3 +\cdots$ and $\varphi(z)= 1+ B_1 z + B_2 z^2 + \cdots$.
Since $f \in \mathcal{K}(\varphi)$, there is a function $w$ in the class $\Omega$ of Schwarz functions   such
  that
  \begin{equation}\label{4ka}
  1+  \frac{z f''(z)}{f'(z)}= \varphi(w(z)).
  \end{equation}
   %Define the function  $p_1:\mathbb{D}\to\mathbb{C}$ by
% \[ p_1(z)=\dfrac{1+ w(z)}{1-w(z)}=1 + c_1 z + c_2 z^2 + \cdots. \]
%  Then
% \[ w(z)= \dfrac{p_1(z)-1}{p_1(z)+1}= \dfrac{1}{2}\left(c_1 z +\left(c_2-\dfrac{c_1^2}{2}\right)z^2+\cdots\right).\]
% Clearly, the function $p_1$ is analytic in $\mathbb{D}$ with $p_1(0)=1$ and $ p_1\in \mathcal{P}$. Since
% $\varphi(z)= 1 + B_1 z + B_2 z^2 + B_3 z^3+ \cdots$,  we get
% \begin{equation}\label{2K}
% \varphi\left(\dfrac{p_1(z)-1}{p_1(z)+1}\right)=1 + \dfrac{1}{2}B_1 c_1z + \left(\dfrac{1}{2}B_1\left(c_2-\dfrac{c_1^2}{2}\right)+\dfrac{1}{4}B_2 c_1^2\right)z^2 +\cdots.
% \end{equation}
  The Taylor series expansion of the function $f$ given by $f(z)= z+ a_2 z^2 + a_3 z^3 + \cdots$ shows that
 \begin{align}\label{6K'}
 1+\frac{z f''(z)}{f'(z)}&= 1 + 2 a_2 z + (-4 a_2^2 + 6 a_3) z^2 +\cdots.
 % (8 a_2^3 - 18 a_2 a_3 + 12 a_4) z^3 +
 \end{align}
 Then using \eqref{4ka}, \eqref{6K'} and \eqref{2K*}, the coefficients $a_2$ and  $a_3$ can be expressed as a function of the coefficients $c_i$ of $p \in \mathcal{P}$ given by
\begin{align*}
  a_2 &= \frac{1}{4}  B_1 c_1,\intertext{and}
  a_3 &= \frac{1}{24}\big((-B_1 + B_1^2 + B_2) c_1^2 + 2 B_1 c_2\big).
 % a_4 &= \frac{1}{192}\big((2 B_1 - 3 B_1^2 + B_1^3 - 4 B_2 + 3 B_1 B_2 + 2 B_3) c_1^3 \\ & \quad + (-8 B_1 +
%   6 B_1^2 + 8 B_2) c_1 c_2 + 8 B_1 c_3 \big)\notag.
\end{align*}
Using the well-known estimate $|c_n| \leq 2$ for the function $p_1$ with positive real part,  it follows that
\begin{align}\label{A2'}
|a_2| &\leq  \frac{B_1}{2}.
\end{align}
 For a function   $f\in \mathcal{K}(\varphi)$, Ma and Minda \cite{mam} proved     the following inequality
\begin{align}\label{fs2}
|a_3 - \mu a_2^2|\leq \left\{
                        \begin{array}{ll}
                          \dfrac{1}{6}\big(B_2 - \dfrac{3}{2}\mu B_1^2 + B_1^2\big), \quad  \text{if\ }  3 B_1^2 \mu \leq 2(B_2 + B_1^2 -B_1) ; \\ \ \\
                          \dfrac{1}{6} B_1,   \quad \text{if\ }  2 (B_2+ B_1^2 - B_1) \leq 3 B_1^2 \mu \leq 2(B_2 + B_1^2 +B_1) ;  \\ \ \\
                           \dfrac{1}{6}\big(-B_2 + \dfrac{3}{2}\mu B_1^2 - B_1^2\big), \quad  \text{if\ }   2(B_2 + B_1^2 +B_1) \leq 3 B_1^2  \mu .
                        \end{array}
                      \right.
\end{align}
 Since $B_1\leq |B_2+B_1^2|$, the inequality \eqref{fs2} readily gives
\begin{align}
|a_3| &\leq \frac{1}{6}\big|  B_1^2 +  B_2 \big|\label{A3'}.
\end{align}% % a_4 &= \frac{1}{48}\big((2 B_1 - 3 B_1^2 + B_1^3 - 4 B_2 + 3 B_1 B_2 + 2 B_3) c_1^3 + 8 B_1 c_3 \\ &\quad + (-8 B_1 +
%%    6 B_1^2 + 8 B_2) c_1 c_2\big)\notag\label{a4*} \label{A4'}.
Using the bound for $a_2$ and $a_3$ given respectively by \eqref{A2'} and \eqref{A3'}, we get
\begin{equation*}
 |a_3^2 - a_2^2| \leq |a_3|^2 + |a_2|^2 \leq \frac{1}{36}\big( B_1^2 + B_2\big)^2 + \frac{1}{4} B_1^2.
\end{equation*}

The result is sharp for the function $H_{\varphi}$ defined in \eqref{H1}. Indeed, for this function $H_\varphi$, we have $a_2=B_1i/2$ and $a_3=-(B_1^2+B_2)/6$ and hence
\begin{equation*}
 |a_3^2 - a_2^2| = \frac{1}{36}\big( B_1^2 + B_2\big)^2 + \frac{1}{4} B_1^2
\end{equation*}
proving the sharpness of the result.
\end{proof}

Theorem~\ref{th3} and Theorem~\ref{th4} give  the sharp bound for the  Toeplitz determinant $T_3(1)$  for  functions respectively  in the classes $\mathcal{S}^*(\varphi)$ and  $\mathcal{K}(\varphi)$.

\begin{theorem}\label{th3}
   If $f \in \mathcal{S}^*(\varphi)$ and $\varphi(z)= 1+ B_1 z + B_2 z^2 + \cdots$, with $B_1>0$ and     $ B_1 -B_1^2\leq B_2\leq  3B_1^2-B_1$, then the Toeplitz determinant $T_3(1)$ satisfies the sharp bound:
\[ |T_3(1)|  \leq  1 + 2 B_1^2 + \dfrac{1}{4}(B_2 + B_1^2)(3 B_1^2- B_2).\]
\end{theorem}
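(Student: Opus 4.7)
The plan is to expand $T_3(1)$, factor it in a way that matches the shape of the target bound, and then invoke two coefficient estimates already in hand from the proof of Theorem~\ref{th1}: the bound \eqref{ba3} on $|a_3|$, and the Fekete--Szeg\"o estimate \eqref{fs1} at a carefully chosen value of $\mu$.

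First I would expand the $3\times 3$ Toeplitz determinant (with $a_1=1$) and observe the factorization
\begin{equation*}
T_3(1)=1-2a_2^{2}+2a_2^{2}a_3-a_3^{2}=(1-a_3)(1+a_3-2a_2^{2}),
\end{equation*}
which isolates $a_3$ in one factor and the Fekete--Szeg\"o-type expression $a_3-2a_2^{2}$ in the other. The triangle inequality then gives
\begin{equation*}
|T_3(1)|\le (1+|a_3|)(1+|a_3-2a_2^{2}|).
\end{equation*}

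Next I would bound the two factors separately. The hypotheses $B_1>0$ and $B_1-B_1^{2}\le B_2$ force $B_1^{2}+B_2\ge B_1>0$, so in particular $B_1\le |B_2+B_1^{2}|$ and \eqref{ba3} delivers $|a_3|\le (B_1^{2}+B_2)/2$. For the second factor I would apply \eqref{fs1} with $\mu=2$; the hypothesis $B_2\le 3B_1^{2}-B_1$ is exactly the third-case condition $B_2+B_1^{2}+B_1\le 4B_1^{2}=2\mu B_1^{2}$, yielding $|a_3-2a_2^{2}|\le (3B_1^{2}-B_2)/2$. Multiplying and simplifying,
\begin{equation*}
|T_3(1)|\le \frac{(2+B_1^{2}+B_2)(2+3B_1^{2}-B_2)}{4}=1+2B_1^{2}+\frac{1}{4}(B_1^{2}+B_2)(3B_1^{2}-B_2),
\end{equation*}
as required.

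For sharpness I would reuse the function $K_{\varphi}$ of \eqref{K1} corresponding to $w(z)=iz$. As recorded in the proof of Theorem~\ref{th1}, this choice produces $a_2=iB_1$ and $a_3=-(B_1^{2}+B_2)/2$; with these values $1-a_3=1+(B_1^{2}+B_2)/2$ and $1+a_3-2a_2^{2}=1+(3B_1^{2}-B_2)/2$ are both real and positive under the standing hypotheses on $B_2$, so each triangle inequality above is saturated and direct substitution into the factorization recovers the asserted value.

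The main obstacle, in my view, is settling on the correct factorization of $T_3(1)$. A crude triangle-inequality bound $|T_3(1)|\le 1+2|a_2|^{2}+2|a_2|^{2}|a_3|+|a_3|^{2}$ based only on \eqref{ba2} and \eqref{ba3} strictly exceeds the claimed answer, because it ignores the algebraic coupling between $a_2$ and $a_3$ imposed by $f\in\mathcal{S}^{*}(\varphi)$. The Fekete--Szeg\"o functional at $\mu=2$ encodes precisely that coupling, and the two-sided hypothesis $B_1-B_1^{2}\le B_2\le 3B_1^{2}-B_1$ is exactly what is needed for both coefficient estimates to be simultaneously tight at $K_{\varphi}$.
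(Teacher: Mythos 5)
Your proposal is correct, and it rests on exactly the same two pillars as the paper's proof: the estimate $|a_3|\le\tfrac12(B_1^2+B_2)$ from \eqref{ba3} (valid because $B_1-B_1^2\le B_2$ forces $B_1^2+B_2\ge B_1>0$), the Fekete--Szeg\H{o} bound \eqref{fs1} in its third case at $\mu=2$ giving $|a_3-2a_2^2|\le\tfrac12(3B_1^2-B_2)$, and sharpness via $K_\varphi$ with $a_2=iB_1$, $a_3=-(B_1^2+B_2)/2$. The one genuine difference is the algebraic decomposition: you factor $T_3(1)=(1-a_3)(1+a_3-2a_2^2)$ and apply the triangle inequality to each factor, whereas the paper keeps the expansion $T_3(1)=1-2a_2^2-a_3(a_3-2a_2^2)$ and estimates $|T_3(1)|\le 1+2|a_2|^2+|a_3|\,|a_3-2a_2^2|$, additionally invoking $|a_2|\le B_1$ from \eqref{ba2}. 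The two routes happen to land on the same number because $|a_3|+|a_3-2a_2^2|\le\tfrac12(B_1^2+B_2)+\tfrac12(3B_1^2-B_2)=2B_1^2$, which coincides with the paper's bound $2|a_2|^2\le 2B_1^2$; your factorization has the aesthetic advantage of not needing the $|a_2|$ estimate at all, and it makes the sharpness verification transparent since both factors are positive reals at $K_\varphi$. Your closing observation --- that the cruder bound $1+2|a_2|^2+2|a_2|^2|a_3|+|a_3|^2$ overshoots because it ignores the coupling captured by $a_3-2a_2^2$ --- is exactly the point the paper's grouping of $a_3(a_3-2a_2^2)$ is designed to exploit.
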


\begin{proof}Since
\[T_3(1) = \left|\begin{array}{ccc}
1 & a_2 & a_3 \\
 a_2 & 1 & a_2 \\
 a_3 & a_2 & 1
\end{array}\right| =  1- 2 a_2^2-a_3 (a_3- 2a_2^2)  \]
  it follows  that
\begin{align}\label{T3a}
 |T_3(1)| & \leq  1+ 2 |a_2|^2 + |a_3||a_3- 2 a_2^2|.
\end{align}
 Since $B_1\leq B_1^2 +B_2$, the inequality \eqref{ba3} gives
\begin{align}\label{ba3a}
|a_3|  \leq \frac{1}{2}\big( B_1^2 + B_2 \big).
\end{align}
Since  $B_1+B_2 \leq  3B_1^2$, the equation \eqref{fs1} readily yields
\begin{align}\label{ba4a}
|a_3-2a_2^2|  \leq \frac{1}{2}\big(3 B_1^2- B_2 \big).
\end{align}
Using these estimates for the second and third coefficients given in \eqref{ba2} and \eqref{ba3a},  and  the bound for $a_3- 2 a_2^2$  given by \eqref{ba4a} in \eqref{T3a}, we obtain
\[ |T_3(1)|  \leq  1 + 2 B_1^2 + \dfrac{1}{4}(B_2 + B_1^2)( 3 B_1^2 -B_2).\]

The result is sharp for the function $K_\varphi$ given by \eqref{K1}. For this function $K_\varphi$, we have  $a_2=iB_1$ and $a_3=-(B_1^2+B_2)/2$ and
\[ 1- 2 a_2^2-a_3 (a_3- 2a_2^2)= 1 + 2 B_1^2 + \dfrac{1}{4}(B_2 + B_1^2)(3 B_1^2- B_2),\]
 proving  the sharpness of our result.
\end{proof}

%If $f \in \mathcal{S}^*(\varphi)$ and $\varphi(z)= 1+ B_1 z + B_2 z^2 + \cdots$ with
%    $|B_2-3 B_1^2|\leq B_1 $, then it can be shown that
%     \[ |T_3(1)|  \leq              1 + 2 B_1^2 + \dfrac{1}{4}(B_2 + B_1^2)B_1. \]
%    However, this estimate need not be sharp.

\begin{theorem}\label{th4}
  If $f \in \mathcal{K}(\varphi)$ and $\varphi(z)= 1+ B_1 z + B_2 z^2 + \cdots$ with $B_1>0$ and     $ B_1 -B_1^2\leq B_2\leq  2B_1^2-B_1$, then
  the Toeplitz determinant $T_3(1)$ satisfies the sharp bound:
\begin{align*}
|T_3(1)|& \leq 1 + \dfrac{1}{2} B_1^2 + \dfrac{1}{36}( B_1^2 + B_2)(2 B_1^2-B_2 ).
\end{align*}
\end{theorem}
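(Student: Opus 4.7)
The plan is to mimic the structure of the proof of Theorem~\ref{th3}, but with the coefficient bounds appropriate for $\mathcal{K}(\varphi)$. First I would expand the $3\times 3$ determinant to obtain
\[
T_3(1) = 1 - 2a_2^2 - a_3(a_3 - 2a_2^2),
\]
so that by the triangle inequality
\[
|T_3(1)| \leq 1 + 2|a_2|^2 + |a_3|\,|a_3 - 2a_2^2|.
\]
Thus it suffices to estimate the three quantities $|a_2|$, $|a_3|$, and $|a_3 - 2a_2^2|$.

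For $|a_2|$, I would invoke the bound \eqref{A2'} from the proof of Theorem~\ref{th2}, giving $|a_2| \leq B_1/2$ and hence $2|a_2|^2 \leq B_1^2/2$. For $|a_3|$, I would note that the hypothesis $B_1 - B_1^2 \leq B_2$ is equivalent to $B_1 \leq B_1^2 + B_2$, so in particular $B_1 \leq |B_1^2+B_2|$; consequently \eqref{A3'} applies and yields $|a_3| \leq (B_1^2+B_2)/6$. For $|a_3 - 2a_2^2|$, I would apply the Fekete--Szeg\"o inequality \eqref{fs2} with $\mu = 2$: the upper hypothesis $B_2 \leq 2B_1^2 - B_1$ rearranges to $2(B_2+B_1^2+B_1) \leq 6B_1^2 = 3B_1^2\mu$, placing us squarely in the third branch of \eqref{fs2}. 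This branch delivers $|a_3 - 2a_2^2| \leq \tfrac{1}{6}(2B_1^2 - B_2)$. Substituting all three estimates into the triangle-inequality bound produces precisely
\[
|T_3(1)| \leq 1 + \tfrac{1}{2}B_1^2 + \tfrac{1}{36}(B_1^2+B_2)(2B_1^2-B_2).
\]

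For sharpness I would test the extremal function $H_\varphi$ from \eqref{H1}, for which it was already recorded in the proof of Theorem~\ref{th2} that $a_2 = iB_1/2$ and $a_3 = -(B_1^2+B_2)/6$. A short computation gives $1 - 2a_2^2 = 1 + B_1^2/2$ and
\[
-a_3(a_3 - 2a_2^2) = \tfrac{1}{36}(B_1^2+B_2)\bigl[-(B_1^2+B_2) + 3B_1^2\bigr] = \tfrac{1}{36}(B_1^2+B_2)(2B_1^2 - B_2),
\]
matching the bound exactly.

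The calculations are all routine once the right Fekete--Szeg\"o parameter is chosen; the only real point of care is to verify that the two-sided hypothesis $B_1 - B_1^2 \leq B_2 \leq 2B_1^2 - B_1$ simultaneously certifies (i) that the modulus in \eqref{A3'} can be removed without sign ambiguity and (ii) that $\mu = 2$ lands in the third branch of \eqref{fs2}. Note that this upper bound $B_2 \leq 2B_1^2 - B_1$ is strictly tighter than the analogous $B_2 \leq 3B_1^2 - B_1$ appearing in Theorem~\ref{th3}, reflecting the fact that the Fekete--Szeg\"o thresholds for $\mathcal{K}(\varphi)$ shift by a factor of $2/3$ relative to those for $\mathcal{S}^*(\varphi)$.
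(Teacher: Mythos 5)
Your proposal is correct and follows essentially the same route as the paper: expand $T_3(1)=1-2a_2^2-a_3(a_3-2a_2^2)$, bound $|a_2|$ by \eqref{A2'}, bound $|a_3|$ and $|a_3-2a_2^2|$ via the Fekete--Szeg\"o inequality \eqref{fs2} under the stated hypotheses, and verify sharpness with $H_\varphi$. The only difference is that you spell out the branch selection in \eqref{fs2} with $\mu=2$ explicitly, which the paper leaves implicit.
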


\begin{proof}
The given conditions on $B_1$ and $B_2$ is the same as $B_1\leq B_1^2 + B_2$ and, $B_1+B_2\leq 2 B_1^2$.
Since $B_1\leq  B_1^2 +B_2 $, the inequality \eqref{fs2} gives
\begin{align}\label{ba3b}
|a_3|  \leq \frac{1}{6}\big( B_1^2 + B_2 \big).
\end{align}
Since $B_1 \leq 2B_1^2-B_2 $, the inequality \eqref{fs2} gives
\begin{align}\label{ba3c}
|a_3-2a_2^2|  \leq \frac{1}{6}\big( 2B_1^2 - B_2 \big).
\end{align}
Using the bound for $a_2$ and $a_3$ given by \eqref{A2'} and \eqref{ba3b} and the bound for $a_3-2a_2$ given by \eqref{ba3c} in \eqref{T3a}, we get the desired result.

The result is sharp for the function $H_{\varphi}$ defined in \eqref{H1}. Indeed, for this function $H_\varphi$, we have $a_2=B_1i/2$ and $a_3=-(B_1^2+B_2)/6$ and hence
\begin{equation*}
 1- 2 a_2^2-a_3 (a_3- 2a_2^2) =  1 + \dfrac{1}{2} B_1^2 + \dfrac{1}{36}( B_1^2+B_2)(2 B_1^2-B_2 )
\end{equation*}
proving the sharpness of the result.
\end{proof}

\begin{remark} The problem of finding the sharp bound for $T_2(2)$  for functions in the classes $\mathcal{S}^*(\varphi)$ and $\mathcal{K}(\varphi)$ is open when $|B_2+B_1^2| \leq B_1$. Similarly, the determination of sharp bounds for $T_3(1)$ in other cases are open. It may be interesting to extend the results for other classes, in particular, the classes considered in \cite{aouf1} and \cite{aouf2}.
\end{remark}

\section{Some Special Cases}
%\label{sec:1}

Ma and Minda classes of starlike and convex functions include several well-known classes as special cases which have been studied by several authors (see for example \cite{kar, mah}). For some of these subclasses,  Theorems \ref{th1}--\ref{th4} give  the sharp bounds for  $|T_2(2)|$ and $|T_3(1)|$.

\begin{description}
\item[3.1] For $-1 \leq B <A \leq 1$, $\mathcal{S}^*[A,B]:= \mathcal{S}^*((1+ A z)/(1+ B z))$ is the familiar class consisting of Janowski starlike functions and $\mathcal{K}[A,B]:= \mathcal{K}((1+ A z)/(1+ B z))$ is the class of Janowski convex functions. These classes were initially introduced and studied by Janowski \cite{Janowski}. The series expansion of $\varphi(z) = (1+ A z)/(1 + B z)$ yields
\begin{equation*}
  \varphi(z):= \frac{1+ A z}{1+ B z}= 1 + (A - B) z + B ( B - A ) z^2 + B^2 (A - B) z^3 + \cdots
\end{equation*}
which implies $ B_1 = (A - B)$ and $B_2 = -B ( A - B )$.

If $|A-2B|\geq 1$, then, for $f\in \mathcal{S}^*[A,B]$,
\begin{align*}
|T_2(2)|& \leq (A - B)^2 (4 + A^2 - 4 A B + 4 B^2)/4
\intertext{ and for  $f\in \mathcal{K}[A,B]$}
|T_2(2)| & \leq (A - B)^2 (9 + A^2 - 4 A B + 4 B^2) /36 .
\end{align*}
If $B\leq \min\{(A-1)/2, (3A-1)/2\}$, then, for $f\in \mathcal{S}^*[A,B]$, we have
\begin{align*}
 |T_3(1)| & \leq    1 + 2 (A - B)^2 +  (3 A^2 - 5 A B + 2 B^2) (A^2 - 3 A B + 2 B^2)/4. \intertext{ If $A+B\geq 0$ and $B\leq (A-1)/2$, then, for  $f\in \mathcal{K}[A,B]$,}
  |T_3(1)|& \leq  1 + (A - B)^2/2 + (2 A^2 - 3 A B + B^2) (A^2 - 3 A B + 2 B^2)/36.
\end{align*}

The classes  $\mathcal{S}^*(\a):=\mathcal{S}^*[1-2\a,-1]$   and $\mathcal{K}(\alpha):=\mathcal{K}[1-2\a,-1]$, respectively, consisting of the starlike functions of order $\a$ and convex functions of order $\a$   were introduced and studied by Robertson \cite{rob}. For $f \in \mathcal{S}^*(\a)$, we have
\begin{align*}
|T_2(2)| & \leq (1 - \alpha)^2 (13 - 12 \alpha + 4 \alpha^2),
\intertext{and}
|T_3(1)|& \leq 24 - 74 \alpha + 91 \alpha^2 - 52 \alpha^3 + 12 \alpha^4,\quad   \hbox{$\a \leq 2/3$.}
\intertext{ For  $f \in \mathcal{K}(\a)$, we have }
|T_2(2)| &  \leq 2(1 - \alpha)^2 (9 - 6 \alpha + 2 \alpha^2)/9.
\intertext{ and }
|T_3(1)| & \leq (36 - 72 \alpha + 71 \alpha^2 - 34 \alpha^3 + 8 \alpha^4)/9 , \quad   \hbox{$\a \leq 1/2$.}
 \end{align*}
 In particular, for $f\in \mathcal{S}^*:=\mathcal{S}^*(0)$, we have $|T_2(2)|\leq 13$ and $|T_3(1)|\leq 24$. For  $f\in \mathcal{K}:=\mathcal{K}(0)$, we have $|T_2(2)|\leq 2$. Also, $|T_3(1)|\leq 4$ for $f\in \mathcal{K}$.   These bounds for starlike and convex functions were recently obtained  in \cite{halim}.

%\item[2.1.4]  When $\varphi = \sqrt{1+z}$, we get $\mathcal{S}^*_{L} := \mathcal{S}^*(\sqrt{1+z})$ which consists of the functions $f \in \mathcal{A}$ such that $z f'(z)/f(z)$ lies in the domain bounded by the right half of the lemniscate of Bernoulli given by $|w^2-1|<1$. Sok\'{o}l and Stankiewicz \cite{sok,sok2} introduced and studied this subclass. Also, $\mathcal{K}_{L} := \mathcal{K}(\sqrt{1+z})$ which consists of the functions $f \in \mathcal{A}$ such that $1 + z f''(z)/f'(z)$ lies in the domain bounded by the right half of the lemniscate of Bernoulli. The Taylor series expansion for $\varphi(z)= \sqrt{1+z}$ gives
%    \begin{equation}\label{1*}
%  \varphi(z):= \sqrt{1+z} =1 + \frac{1}{2}z - \frac{1}{8} z^2 + \frac{1}{16}z^3 - \frac{5}{128} z^4 + \cdots,
%\end{equation}
%we have $ B_1 = {1}/{2}$ and $B_2 = -{1}/{8}$. Thus, $|T_2(2)| \leq 65/256 =0.253906$ and $|T_3(1)| \leq   391/256 = 1.52734 $ for $f \in \mathcal{S}^*_{L}$, whereas, for $f \in \mathcal{K}_{L}$, we have $|T_2(2)| \leq 145/2304 = 0.062934 $ and $|T_3(1)| \leq 2597/2304 =  1.12717$.

\item[3.2]  Mendiratta \emph{et al.}\cite{sumit} introduced and studied the class $\mathcal{S}^*_e = \mathcal{S}^*(e^z)$.  More generally, Khatter \emph{et al.} \cite{Khatter} defined and studied the classes $\mathcal{S}^*_{\a,e}:=\mathcal{S}^*(\a + (1-\a)e^z)$ and  $\mathcal{K}_{\a,e}:=\mathcal{K}(\a + (1-\a)e^z)$ where $0 \leq \a <1$. When $\a=0$, this classes reduce to the classes $\mathcal{S}^*_e$ and $\mathcal{K}_e$ respectively. The Taylor series   of $\varphi$ given by
  \begin{align*}
  \varphi(z) &:= \a + (1-\a)e^z =1 + (1 - \alpha) z + \frac{1}{2}(1 - \alpha) z^2 + \frac{1}{6}(1 - \alpha) z^3 + \cdots
\end{align*}
shows that  $ B_1= (1 - \alpha) $ and $ B_2 = (1 - \alpha)/2 $. For $0\leq \alpha \leq 1/2$, we get
\begin{align*}|T_2(2)| &  \leq (1 - \alpha)^2 (25 - 12 \alpha + 4 \alpha^2)/16
\intertext{and}
 |T_3(1)| &\leq (7 - 5 \alpha + 2 \alpha^2) (9 - 11 \alpha + 6 \alpha^2)/16
\end{align*}
 for $f \in \mathcal{S}^*_{\a,e}$. For $0\leq \alpha \leq 1/2$,
 we get
 \begin{align*}|T_2(2)| &  \leq (1 - \alpha)^2 (45 - 12 \alpha + 4 \alpha^2)/144
 \intertext{and}
 |T_3(1)| & \leq  (225 - 180 \alpha + 125 \alpha^2 - 34 \alpha^3 + 8 \alpha^4)/144
 \end{align*}
for $f \in \mathcal{K}_{\a,e}$. In particular,  for $f \in \mathcal{S}^*_e$, we get $|T_2(2)| \leq 25/16 \approx 1.5625$ and $|T_3(1)| \leq 63/16 \approx 3.9375$. For $f \in \mathcal{K}_{e}$, we have $|T_2(2)| \leq  5/16 \approx  0.3125$  and $|T_3(1)| \leq 25/16 \approx  1.5625$.

%
%\item[2.1.6] Mehndiratta \emph{et al.}\cite{sumit2} also introduced and studied the class $\mathcal{S}^*_{RL} = \mathcal{S}^*(\varphi_{RL})$, where
%\[ \varphi_{RL}:= \sqrt{2} - (\sqrt{2} -1) \sqrt{\frac{1-z}{1+2(\sqrt{2}-1)z}}.\]
%In other words, a function $f \in \mathcal{S}^*_{RL}$ if $z f'(z)/f(z)$ lies in the region $\Omega_{RL}$ bounded by the left half of the shifted lemniscate of Bernoulli $\varphi_{RL}(\mathbb{D}) :=\{ (w - \sqrt{2})^2 -1 < 1 \}$. It is known that
%\begin{align*}
%  \varphi_{RL}(z):& = \sqrt{2} - (\sqrt{2} -1) \sqrt{\frac{1-z}{1+2(\sqrt{2}-1)z}}\\& = 1 + \left(\frac{5}{2} - \frac{3}{\sqrt{2}} \right) z + \left(\frac{71}{8} - \frac{51}{4 \sqrt{2}}\right) z^2 + \cdots.
%\end{align*}
%Therefore, $B_1 =({5}/{2} - {3}/{\sqrt{2}})$ and $B_2 = ({71}/{8} - {51}/(4 \sqrt{2}))$. Thus, for $f \in \mathcal{S}^*_{RL}$, we get $|T_2(2)| \leq (52043-36774 \sqrt{2})/ {256}= 0.1434$ and $|T_3(1)| \leq (63757-44850 \sqrt{2})/{256} = 1.28719$.
%
%Also, consider the class $\mathcal{K}_{RL} := \mathcal{K}(\varphi_{RL})$, where
%\[ \varphi_{RL}:= \sqrt{2} - (\sqrt{2} -1) \sqrt{\frac{1-z}{1+2(\sqrt{2}-1)z}}.\]
% A function $f \in \mathcal{K}_{RL}$ if $1+ z f''(z)/f'(z)$ lies in the region $\Omega_{RL}$ bounded by the left half of the shifted lemniscate of Bernoulli $\varphi_{RL}(\mathbb{D}) :=\{ (w - \sqrt{2})^2 -1 < 1 \}$. Since, $B_1 =({5}/{2} - {3}/{\sqrt{2}})$ and $B_2 = ({71}/{8} - {51}/(4 \sqrt{2}))$. Thus, for $f \in \mathcal{K}_{RL}$, we have $|T_2(2)| \leq (55483 - 39174 \sqrt{2})/2304 = 0.0358498 $ and $|T_3(1)| \leq (45863 - 30684 \sqrt{2})/2304 = 1.07173$.
%

\item[3.3]  Sharma \emph{et al.}\cite{sharma} defined and studied the class of functions defined by $\mathcal{S}^*_C = \mathcal{S}^* (\varphi_c(z))$, where $\varphi_c(z) = 1 + (4/3) z + (2/3) z^2$. The geometrical interpretation is that a function $f$ belongs to the class $\mathcal{S}^*_C$ if  $z f'(z)/f(z)$ lies in the region $\Omega_c$ bounded by the cardioid i.e. $ \varphi_{c}(\mathbb{D}):= \{ x + i y : (9 x^2 + 9 y^2 -18 x + 5)^2 -16 (9x^2 + 9 y^2 -6 x +1) = 0 \}$. The convex analogous class of the above mentioned class is $\mathcal{K}_C := \mathcal{K}(\varphi_c(z))$. Its geometrical interpretation is that a function $f$ belongs to the class $\mathcal{K}_C$ if  $1+ z f''(z)/f'(z)$ lies in the region $\Omega_c$ bounded by the cardioid i.e. $ \varphi_{c}(\mathbb{D}):= \{ x + i y : (9 x^2 + 9 y^2 -18 x + 5)^2 -16 (9x^2 + 9 y^2 -6 x +1) = 0 \}$.
    When $f \in \mathcal{S}^*_{C}$, it follows that $zf'(z)/f(z) \prec  1 + (4/3) z + (2/3) z^2$ which yields $B_1= 4/3$ and $B_2 = 2/3$. And therefore $|T_2(2)| \leq 265/81  \approx 3.2716$ and $|T_3(1)| \leq 200/27  \approx 7.40741 $ for $f \in \mathcal{S}^*_{C}$.
    Whereas, $|T_2(2)| \leq 445/729  \approx 0.610425$ and $|T_3(1)| \leq 1520/729 \approx  2.08505$ for $f \in \mathcal{K}_{C}$.

\item[3.4]   Cho \emph{et al.}\cite{cho} defined and studied the class $\mathcal{S}^*_{\sin} = \mathcal{S}^*(1 + \sin z)$. The convex analogous subclass is defined as $\mathcal{K}_{\sin} := \mathcal{K}(1 + \sin z)$. Let the function $f \in \mathcal{S}^*_{\sin}$. Writing the Taylor series expansion for $\sin{z}$, we get
\begin{equation*}
  \varphi(z):= 1+  \sin{z}= 1 + z - \frac{1}{6}z^3 + \frac{1}{120} z^5 + \cdots.
\end{equation*}
Thus, $B_1= 1$ and $B_2 = 0$ which implies $|T_2(2)| \leq 5/4 = 1.25$, proved in \cite{zang}, and $|T_3(1)| \leq 15/4  =  3.75$ for  $f \in \mathcal{S}^*_{\sin}$. Similarly we can obatin $|T_2(2)| \leq 5/18  \approx  0.277778 $ and $|T_3(1)| \leq 14/9  \approx 1.55556$ for  $f \in \mathcal{K}_{\sin}$.

\item[3.5]   Raina and Sokol \cite{raina} defined the class $\mathcal{S}^*_{\leftmoon} = \mathcal{S}^*(\varphi_{\leftmoon})$, where $\varphi_{\leftmoon} = z + \sqrt{1+ z^2}$. Its convex subclass is $\mathcal{K}_{\leftmoon} := \mathcal{K}(\varphi_{\leftmoon})$. The classes $\mathcal{S}^*_{\leftmoon}$ and $\mathcal{K}_{\leftmoon}$ consist of functions for which $z f'(z)/f(z)$ and $1+ z f''(z)/f'(z)$ lies in the the leftmoon region $\Omega_{\leftmoon}$ defined by $ \varphi_{\leftmoon}(\mathbb{D}) :=\{ w\in \mathbb{C}: |w^2 - 1| < 2 |w| \}$.
   Thus, $f \in \mathcal{S}^*_{\leftmoon}$ implies $zf'(z)/f(z) \prec  z + \sqrt{1+ z^2}$ and therefore we have
\begin{equation*}
  \varphi_{\leftmoon}(z):= z + \sqrt{1+ z^2} = 1 + z + \frac{1}{2}z^2 - \frac{1}{8}z^4+ \cdots.
\end{equation*}
Therefore,  $B_1= 1$ and $B_2 = 1/2$ which immediately yields $|T_2(2)| \leq 25/16  =  1.5625$ and $|T_3(1)|\leq 63/16 = 3.9375 $ for $f \in \mathcal{S}^*_{\leftmoon}$. Similarly, $f \in \mathcal{K}_{\leftmoon}$ implies $1+ z f''(z)/f'(z) \prec z + \sqrt{1+ z^2}$, and therefore we have, $|T_2(2)| \leq 5/16 = 0.3125 $ and $|T_3(1)| \leq 25/16= 1.5625$.

%
%\item[2.1.10]  Kumar \emph{et al.}\  \cite{kumar} studied the class $\mathcal{S}^*_R = \mathcal{S}^*(\varphi_R)$, where $\varphi_R = 1+ (zk + z^2)/(k^2 - kz)$, and $k = \sqrt{2} + 1$. The subclass $\mathcal{K}_R := \mathcal{K}(\varphi_R)$. So, $f \in \mathcal{S}^*_R$ implies $zf'(z)/f(z) \prec \varphi_R$. It is routine to find that the Taylor series expansion of $\varphi_R$ yields
%\begin{align*}
%  \varphi_R(z):&= 1+ \frac{z (\sqrt{2} + 1) + z^2}{(\sqrt{2} + 1)^2 - (\sqrt{2} + 1)z} \\
%& = 1 + \frac{1 }{\sqrt{2} + 1}z + \frac{2}{(\sqrt{2} + 1)^2}z^2 + \frac{2}{(1 + \sqrt{2})^3} z^3 + \cdots
%\end{align*}
%  thereby yielding $B_1 = 1/(\sqrt{2} + 1)$ and $ B_2 = {2}/(\sqrt{2} + 1)^2$. Therefore, for $f \in \mathcal{S}^*_R$, we have $|T_2(2)| \leq 165/4 - 29 \sqrt{2} =0.237807$ and $|T_3(1)| \leq (7-\sqrt{2})/{4} = 1.39645 $. Similarly, for $f \in \mathcal{K}_R$, we have $|T_2(2)| \leq 5 - 7/\sqrt{2} = 0.0502525 $ and $|T_3(1)| \leq   (23 - 7 \sqrt{2})/12 = 1.09171$.

\item[3.6]  Ronning \cite{ronning}, motivated by Goodman \cite{good}, introduced and studied the parabolic starlike class $\mathcal{S}_{P}$ and the uniformly convex class $\mathcal{UCV}$ obtained from Ma-Minda class of starlike and convex functions, respectively, by replacing
 \[\varphi(z) := 1+ \frac{2}{\pi^2} \Big( \log \frac{1+ \sqrt{z}}{1-\sqrt{z}}\Big)^2
             =1 + \frac{8}{\pi^2} z + \frac{16}{3 \pi^2} z^2 + \frac{184}{45 \pi^2} z^3+\dotsb.\]
This yields $B_1= {8}/{\pi^2} $ and $B_2 = {16}/{3 \pi^2}$ and thus we get
\begin{align*}
 |T_2(2)| & \leq \big(128 (72+12 \pi ^2+5 \pi ^4)\big)/(9 \pi ^8)  \approx  1.01547
 \intertext{and} |T_3(1)| &  \leq 1 + 3072/\pi^8 + 512/(3 \pi^6) + 1088/(9 \pi^4)  \approx  2.74232
 \end{align*}
 for $f \in \mathcal{S}_{P}$. For $f \in \mathcal{UCV}$, we get
 \[ |T_2(2)| \leq {16 (576+96 \pi ^2+85 \pi ^4 )}/{(81 \pi ^8)}  \approx   0.204083 .\]

\item[3.7]  Yunus \cite{yunus} \emph{et al.} studied the class $\mathcal{S}^*_{lim}:=\mathcal{S}^*(1+ \sqrt{2}z +z^2/2)$ associated with the limacon $(4 u^2 + 4v^2 -8u -5)^2 +  8 (4 u^2 + 4 v^2 -12u -3)=0$. The class $\mathcal{K}_{lim}:=\mathcal{K}(1+ \sqrt{2}z +z^2/2)$. Clearly, in this case $ B_1= \sqrt{2} $ and $ B_2 = 1/2 $ and therefore, we get $|T_2(2)| \leq 57/16 = 3.5625$ and $|T_3(1)| \leq 135/16 = 8.4375 $ for $f \in \mathcal{S}^*_{lim}$. For $f \in \mathcal{K}_{lim}$, $|T_2(2)| \leq 97/144 = 0.673611$ and $|T_3(1)| \leq 323/144 = 2.24306$
%
%\item[2.1.15]  The class $\mathcal{S}^*_{SG}:=\mathcal{S}^*(2/(1+ e^{-z}))$, associated with the sigmoid function has been studied by Goel and Kumar \cite{goel} and the class $\mathcal{K}_{SG}:=\mathcal{K}(2/(1+ e^{-z}))$. Taylor series expansion of $ \varphi(z)$ gives
%    \begin{equation*}
%  \varphi(z):= \frac{2}{1+ e^{-z}} = 1 + \frac{1}{2} z -  \frac{1}{24}  z^3 +  + \cdots.
%\end{equation*}
%Therefore, $ B_1= {1}/{2}$ and $ B_2 = 0 $, and thus, we get $|T_2(2)| \leq 17/64= 0.265625$ and $T_3(1) \leq 99/64 = 1.54688 $ for $f \in \mathcal{S}^*_{SG}$. For $f \in \mathcal{K}_{SG}$, $|T_2(2)| \leq 37/576 = 0.0642361 $ and $|T_3(1)| \leq 325/288 =  1.12847$.

\item[3.8]  Wani \emph{et al.}\cite{swami}, studied the class of functions defined by $\mathcal{S}^*_{Ne} := \mathcal{S}^* (\varphi_{Ne}(z))$ and $\mathcal{K}_{Ne} := \mathcal{K} (\varphi_{Ne})$, where the function $\varphi_{Ne}(z) := 1 +  z  - z^3/3$ maps the unit disk into the interior of the 2-cusped kidney shaped nephroid. Clearly, here  $B_1= 1$ and $ B_2 = 0$, thereby yielding $|T_2(2)| \leq 5/4 = 1.25$ and $|T_3(1)| \leq 15/4 = 3.75$ for $f \in \mathcal{S}^*_{Ne}$. For $f \in \mathcal{K}_{Ne}$, $|T_2(2)| \leq 5/18 = 0.277778 $ and $|T_3(1)| \leq 14/9 = 1.55556$.

\end{description}

\section*{Acknowledgement} The authors are thankful to the referees for their useful comments.
%\section*{Conflict of interest} The authors declare that they have no conflict of interest.

% BibTeX users please use one of
%\bibliographystyle{spbasic}      % basic style, author-year citations
%\bibliographystyle{spmpsci}      % mathematics and physical sciences
%\bibliographystyle{spphys}       % APS-like style for physics
%\bibliography{}   % name your BibTeX data base

\begin{thebibliography}{00}
%
% and use \bibitem to create references. Consult the Instructions
% for authors for reference list style.
%
\bibitem{AA} O. P. Ahuja, The Bieberbach conjecture and its impact on the developments in geometric function theory, Math. Chronicle {\bf 15} (1986), 1--28.

%
%\bibitem{Ali} R. M. Ali, N. K. Jain\ and\ V. Ravichandran, On the radius constants for classes of analytic functions, Bull. Malays. Math. Sci. Soc. (2) {\bf 36} (2013), no.~1, 23--38.
%
%\bibitem{jain} R. M. Ali, N. K. Jain\ and\ V. Ravichandran, Radii of starlikeness associated with the lemniscate of Bernoulli and the left-half plane, Appl. Math. Comput. {\bf 218} (2012), no.~11, 6557--6565.

\bibitem{seeni}
R. M. Ali, V. Ravichandran\ and\ N. Seenivasagan, Coefficient bounds for $p$-valent functions, Appl. Math. Comput. {\bf 187} (2007), no.~1, 35--46.

%\bibitem{ali} R. M. Ali\, and\ V. Ravichandran, Uniformly convex and starlike functions, Ramanujan Mathematics Newsletter, {\bf 21} (2011), no.~1, 16--30.



\bibitem{halim}M. F. Ali, D. K. Thomas\ and\ A. Vasudevarao, Toeplitz determinants whose elements are the coefficients of analytic and univalent functions, Bull. Aust. Math. Soc. {\bf 97} (2018), no.~2, 253--264.

\bibitem{aouf1} M. K. Aouf\ and\ T. M. Seoudy, Certain class of bi-Bazilevi\~{c} functions with bounded boundary rotation involving S\u{a}l\u{a}gean operator, Constr. Math. Anal. {\bf 3} (2020), no.~4, 139--149

\bibitem{aouf2} M. K. Aouf\ and\ T. M. Seoudy, Fekete-Szeg\"{o} problem for certain subclass of analytic functions with complex order defined by $q$-analogue of Ruscheweyh operator, Constr. Math. Anal. {\bf 3} (2020), no.~1, 36--44.

\bibitem{cho2} N. E. Cho, S. Kumar and V. Kumar Hermitian–Toeplitz and Hankel determinants for certain starlike functions, Asian-Eur. J. Math. https://doi.org/10.1142/S1793557122500425


\bibitem{cho} N. E. Cho, V. Kumar, S. Sivaprasad Kumar   and V. Ravichandran, Radius problems for starlike functions associated with the sine function, Bull. Iranian Math. Soc. {\bf 45} (2019), no.~1, 213--232.




\bibitem{cudna} K. Cudna,  O. S. Kwon,   A. Lecko,   Y. J. Sim, and B.
              \'{S}miarowska, The second and third-order Hermitian Toeplitz determinants for starlike and convex functions of order $\alpha$, Bol. Soc. Mat. Mex. (3) {\bf 26} (2020), no.~2, 361--375.

%\bibitem{ali1} R. M. Ali,  {Starlikeness associated with parabolic regions},  Int.\ J.
%Math.\ Math.\ Sci.\ {\bf 2005}, no.~4, 561--570.
%
%
%\bibitem{ali} R. M. Ali, M. M. Nargesi\ and\ V. Ravichandran, Coefficient inequalities for starlikeness and convexity, Tamkang J. Math. {\bf 44} (2013), no.~2, 149--162.
%
%\bibitem{grenander} U. Grenander\ and\ G. Szeg\"o,  {\it Toeplitz Forms and Their Applications},  California Monographs in Mathematical Sciences,  Univ. California Press,  Berkeley,  1958.
%
%\bibitem{ravi} R. M. Ali,  V. Ravichandran, \textit{Uniformly convex
%and uniformly starlike functions}, Math.\ Newsletter, Ramanujan Math.\ Soc.\ \textbf{21} (2011), no.~1, pp.
%16--30.
%
%
%\bibitem{aouf} M. K. Aouf, Linear combinations of regular functions of order alpha with negative coefficients, Publ. Inst. Math. (Beograd) (N.S.) {\bf 47(61)} (1990), 61--67.

%\bibitem{gandhi} S. Gandhi\ and\ V. Ravichandran, Starlike functions associated with a lune, Asian-Eur. J. Math. {\bf 10} (2017), no.~4, 1750064, 12 pp.

%\bibitem{ganga} A. Gangadharan, V. Ravichandran\ and\ T. N. Shanmugam, Radii of convexity and strong starlikeness for some classes of analytic functions, J. Math. Anal. Appl. {\bf 211} (1997), no.~1, 301--313.



%\bibitem{khatter}K. Khatter, V. Ravichandran\ and\ S. S. Kumar, Third Hankel determinant of starlike and convex functions, J. Anal. {\bf 28} (2020), no.~1, 45--56.

%
%
%
%\bibitem{goel} P. Goel\ and\ S. Sivaprasad Kumar, Certain class of starlike functions associated with modified sigmoid function, Bull. Malays. Math. Sci. Soc. {\bf 43} (2020), no.~1, 957--991.

\bibitem{good}A. W. Goodman, On uniformly convex functions, Ann. Polon. Math. {\bf 56} (1991), no.~1, 87--92.

\bibitem{grenander} U. Grenander\ and\ G. Szeg\"{o}, {\it Toeplitz forms and their applications}, California Monographs in Mathematical Sciences, University of California Press, Berkeley, 1958.



\bibitem{Janowski} W. Janowski, Some extremal problems for certain families of analytic functions. I, Ann. Polon. Math. {\bf 28} (1973), 297--326.

\bibitem{kar} R. Kargar, A. Ebadian,   and L. Trojnar-Spelina, Further results for starlike functions related with booth lemniscate. Iran. J. Sci. Technol. Trans. Sci. \textbf{43} (2019), 1235–1238.

\bibitem{Khatter} K. Khatter, V. Ravichandran\ and\ S. Sivaprasad Kumar, Starlike functions associated with exponential function and the lemniscate of Bernoulli, Rev. R. Acad. Cienc. Exactas F\'{\i}s. Nat. Ser. A Mat. RACSAM {\bf 113} (2019), no.~1, 233--253.

\bibitem{lec1}
B. Kowalczyk, A. Lecko\ and\ B. \'{S}miarowska,  On some
  coefficient inequality in the subclass of close-to-convex functions,  Bull.
  Soc. Sci. Lett. \L \'{o}d\'{z} S\'{e}r. Rech. D\'{e}form.,  {67}(2017),
  no.~1,  79--90.

\bibitem{lec2} A. Lecko,   Y. J. Sim, and  B. \'{S}miarowska, The fourth-order Hermitian Toeplitz determinant for convex functions. Anal. Math. Phys., 10(2020), 39.  % 39 is the article number and is of 11 pages.

%\bibitem{kumar} S. Kumar\ and\ V. Ravichandran, A subclass of starlike functions associated with a rational function, Southeast Asian Bull. Math. {\bf 40} (2016), no.~2, 199--212.

\bibitem{mam} W. C. Ma\ and\ D. Minda, A unified treatment of some special classes of univalent functions, in {\it Proceedings of the Conference on Complex Analysis (Tianjin, 1992)}, 157--169, Conf. Proc. Lecture Notes Anal., I, Int. Press, Cambridge,MA.

%\bibitem{mam1} W. C. Ma\ and\ D. Minda, Uniformly convex functions, Ann. Polon. Math. {\bf 57} (1992), no.~2, 165--175.

\bibitem{mah}H. Mahzoon, Further results for $\alpha$-spirallike functions of order $\beta$, Iran. J. Sci. Technol. Trans. A Sci. {\bf 44} (2020), no.~4, 1085--1089.

\bibitem{sumit} R. Mendiratta, S. Nagpal\ and\ V. Ravichandran, On a subclass of strongly starlike functions associated with exponential function, Bull. Malays. Math. Sci. Soc. {\bf 38} (2015), no.~1, 365--386.

%\bibitem{sumit2} R. Mendiratta, S. Nagpal\ and\ V. Ravichandran, A subclass of starlike functions associated with left-half of the lemniscate of Bernoulli, Internat. J. Math. {\bf 25} (2014), no.~9, 1450090, 17 pp.

%\bibitem{uni} K. S. Padmanabhan\ and\ R. Parvatham, Some applications of differential subordination, Bull. Austral. Math. Soc. {\bf 32} (1985), no.~3, 321--330.
%
%\bibitem{siva} V. Radhika, S. Sivasubramanian, G. Murugusundaramoorthy\ and\ J.M. Jahangiri, Toeplitz matrices whose elements are the coefficients of functions with bounded boundary rotation, J. Complex Anal. {\bf 2016}, Art. ID 4960704, 4 pp.


\bibitem{raina} R. K. Raina\ and\ J. Sok\'{o}\l, Some properties related to a certain class of starlike functions, C. R. Math. Acad. Sci. Paris {\bf 353} (2015), no.~11, 973--978.

\bibitem{rob} M. S. Robertson, Certain classes of starlike functions, Michigan Math. J. {\bf 32} (1985), no.~2, 135--140.


\bibitem{ronning} F. R\o nning, A survey on uniformly convex and uniformly starlike functions, Ann. Univ. Mariae Curie-Sk\l odowska Sect. A {\bf 47} (1993), 123--134.
%
%\bibitem{shan}T. N. Shanmugam\ and\ V. Ravichandran, Certain properties of uniformly convex functions, in {\it Computational methods and function theory 1994 (Penang)}, 319--324, Ser. Approx. Decompos., 5, World Sci. Publ., River Edge, NJ.

\bibitem{sharma}K. Sharma, N. K. Jain\ and\ V. Ravichandran, Starlike functions associated with a cardioid, Afr. Mat. {\bf 27} (2016), no.~5-6, 923--939.

%\bibitem{sok2} J. Sok\'{o}l, Radius problems in the class $\mathcal{S}^*_{L}$, Appl. Math. Comput. {\bf 214} (2009), no.~2, 569–573.
%
%\bibitem{sok} J. Sok\'{o}l\ and\ J. Stankiewicz, Radius of convexity of some subclasses of strongly starlike functions, Zeszyty Nauk. Politech. Rzeszowskiej Mat. No. 19 (1996), 101--105.


%\bibitem{liu} M.-S. Liu, Y.-C. Zhu\ and\ H. M. Srivastava, Properties and characteristics of certain subclasses of starlike functions of order $\beta$, Math. Comput. Modelling {\bf 48} (2008), no.~3-4, 402--419.
%
%\bibitem{merkes} E. P. Merkes, M. S. Robertson\ and\ W. T. Scott,  {On products of starlike functions},
%Proc.\ Amer.\ Math.\ Soc.\ {\bf 13} (1962), 960--964.
%
%\bibitem{silver} H. Silverman\ and\ E. M. Silvia, Subclasses of starlike functions subordinate to convex functions, Canad. J. Math. {\bf 37} (1985), no.~1, 48--61.


\bibitem{BB} O. Toeplitz, Zur Transformation der Scharen bilinearer Formen von unendlichvielen
Ver\"anderlichen, Nachr. der Kgl. Gessellschaft derWissenschaften zu G\"ottingen, Mathematischphysikalische, Klasse (1907), 110--115.

\bibitem{swami}L. A. Wani\ and\ A. Swaminathan, Starlike and convex functions associated with a nephroid domain, Bull. Malays. Math. Sci. Soc. {\bf 44} (2021), no.~1, 79--104.



\bibitem{lim} K. Ye\ and\ L.-H. Lim, Every matrix is a product of Toeplitz matrices, Found. Comput. Math. {\bf 16} (2016), no.~3, 577--598.

\bibitem{yunus} Y. Yunus, S. A. Halim\ and\ A. B. Akbarally, Subclass of starlike functions associated with a limacon,   AIP Conference Proceedings  {\bf 1974} (2018), no. 1, 030023.

\bibitem{zang} H.-Y. Zhang, R. Srivastava, H.  Tang, Third-order Hankel and Toeplitz determinants for starlike functions connected with the sine function, Mathematics {\bf 7} (2019), no.~5, 404.

\end{thebibliography}

% Non-BibTeX users please use

\end{document}